\documentclass[11pt]{amsart}
\usepackage[T1]{fontenc}
\usepackage{lmodern}
\usepackage{amsmath,amssymb,mathtools}
\usepackage{xcolor}
\usepackage[margin=1.05in]{geometry}
\usepackage[colorlinks=true,linkcolor=blue,citecolor=blue,urlcolor=blue]{hyperref}
\numberwithin{equation}{section}
\newtheorem{theorem}{Theorem}[section]

\newtheorem{lemma}[theorem]{Lemma}
\newtheorem{corollary}[theorem]{Corollary}
\theoremstyle{remark}
\newtheorem{remark}[theorem]{Remark}
\newcommand{\T}{\mathbb T}
\newcommand{\R}{\mathbb R}
\newcommand{\norm}[1]{\left\|#1\right\|}
\title[Sharp decay for nonlinearly damped NLS]{Asymptotic stability and sharp decay near homogeneous solutions of a nonlinearly damped Schr\"odinger equation on the torus}
\author{Boris Shakarov}

\subjclass[2020]{35Q55, 35B35, 35B40}
\keywords{Nonlinear Schr\"odinger equation, nonlinear damping, asymptotic stability, sharp decay, periodic solutions}
\begin{document}
\begin{abstract}
We study the one-dimensional periodic nonlinear Schr\"odinger equation with a power-type nonlinear damping, both in the focusing and the defocusing regime. For a sufficiently small damping parameter and initial data close in relative $H^1$ norm to a nonzero spatially homogeneous state, we prove global existence and asymptotic convergence, after a constant phase adjustment, to an explicit homogeneous solution. The \(L^2\) and \(H^1\) norms follow the explicit adiabatic decay law, with the same leading coefficient as the homogeneous solution. We obtain explicit convergence rates, including a \(t^{-1/2}\) bound for the relative spatial perturbation. For every nonconstant initial datum in a small neighborhood of the homogeneous solution, the kinetic energy has a strictly positive leading coefficient and decays one power of time faster than the squared \(L^2\) norm.
\end{abstract}
\maketitle

\section{Introduction}

The nonlinear Schr\"odinger equation 
\begin{equation}
 \label{eq:NLSNO}
 i\partial_tu-\partial_x^2u+\lambda|u|^{p}u = 0
\end{equation}
is an effective model for the evolution of slowly varying wave envelopes \cite{SS,Z68}. But in many relevant cases, dissipative effects, which are ignored in \eqref{eq:NLSNO}, can change the long-time evolution even when they are weak \cite{ASP,PSS}. For example, damping can prevent finite-time blow-up even when its coefficient is arbitrarily small \cite{ASP}. Thus, in this work we consider the damped nonlinear Schr\"odinger equation 
\begin{equation}\label{eq:nls}
 i\partial_tu-\partial_x^2u+\lambda|u|^{p}u
  +i\varepsilon|u|^{q}u=0,\qquad u(0)=u_0\in H^1(\T),
\end{equation}
where $\T=\R/\mathbb Z$, $p,q>0$, $\lambda\in\R$ and
$\varepsilon>0$ is a (small) damping parameter. Examples of the nonlinear damping include multi-photon absorption in nonlinear optics \cite{RKFM}, and three-body recombination in Bose-Einstein
condensates \cite{KMuS}.
Power-type damping provides a common setting for studying such
amplitude-dependent losses; see also \cite{ACS, ASP,PSS,Darwich}. For further background on damped nonlinear Schr\"odinger models, see for instance
\cite[Chapter~33]{Fibich15} and the references therein.

This model is dissipative as the $L^2$-norm of any non-trivial solution is decreasing. Indeed, denoting the mass of a solution by
\begin{equation}\label{eq:mass}
 m(t)=\norm{u(t)}_2
\end{equation}
one has
$$\frac{d}{dt}m^2=-2\varepsilon\norm u_{q+2}^{q+2}.$$
Moreover, H\"older's inequality provides the following upper bound on the decay rate
\begin{equation}\label{eq:upper}
 m(t)\le
 \frac{m(0)}
 {\bigl(1+q\varepsilon m(0)^{q}t\bigr)^{1/q}} := m_{\mathrm{ad}}(t), \qquad m_{\mathrm{ad}}(t)^{-q}=m(0)^{-q}+q\varepsilon t.
\end{equation}
We call $m_{\mathrm{ad}}$ the adiabatic decay law. It describes the decay of a uniform optical intensity or particle
density. Indeed, equality in \eqref{eq:upper} holds for
the exact spatially homogeneous solutions
\begin{equation}\label{eq:homogeneous}
 u_{\rm hom}(t,x)=m_{\mathrm{ad}}(t)e^{i(\theta+\Phi(t))},\qquad
 \Phi(t)=\lambda\int_0^t m_{\mathrm{ad}}(s)^{p}\,ds,
\end{equation}
where $\theta\in\R$. 

In this work, we investigate whether nearby non-constant solutions approach the homogeneous solution in \eqref{eq:homogeneous} relative to its decreasing amplitude, whether
their mass has the same leading decay coefficient, and how rapidly
their remaining spatial variation disappears.

The mass bound \eqref{eq:upper} alone does not distinguish these
questions from convergence to zero. In the usual orbital stability
formulation for a Hamiltonian standing wave, the reference profile
has a fixed nonzero size. But for a decaying reference
state, an absolute error may tend to zero while remaining comparable to, or becoming larger than, the reference amplitude. A relative estimate therefore carries information about the profile that an absolute decay estimate can miss.

A similar viewpoint is present in the linear modulation studies
of \cite{RKFM,CC}. Here we use the evolving norm $m(t)$ of the full
nonlinear solution and a minimizing phase to formulate relative
stability in $H^1$. Thus we introduce the decomposition

\begin{equation}\label{eq:decomposition}
 u(t,x)=e^{i\gamma(t)}\bigl(m(t)+\xi(t,x)\bigr),
 \qquad
 \xi(t,x):=e^{-i\gamma(t)}u(t,x)-m(t),
\end{equation}

where the phase is chosen by
\begin{equation}\label{eq:phasechoice}
 \norm{e^{-i\gamma(t)}u(t)-m(t)}_2^2
 =\inf_{\theta\in[0,2\pi)}
  \norm{e^{-i\theta}u(t)-m(t)}_2^2 = 
2m^2-2m \sup_{\theta\in[0,2\pi)} \Re\left(e^{-i\theta}\int_0^1u\,dx\right).
\end{equation}

When the spatial mean is nonzero, this phase is its argument, modulo
$2\pi$. The assumptions below ensure that the mean never vanishes.

Our main result establishes relative stability and sharp decay for
sufficiently small $\varepsilon$ and sufficiently small initial
relative perturbations. In the focusing case $\lambda <0$, we further impose the condition \eqref{eq:gap}. The number $4\pi^2$ is the first nonzero
eigenvalue of $-\partial_x^2$ on $\T$; this condition makes the
quadratic energy coercive in the directions transverse to the constant
orbit. It is automatic when $\lambda\ge0$.

Since the loss rate
is greater where the amplitude is larger, it is natural to ask whether
the evolution makes the spatial profile increasingly uniform relative
to its decreasing size.
We study this question on a periodic domain, where the long-time
behavior cannot be attributed to spatial spreading to infinity. Our result establishes that solutions starting close to the homogeneous one exist globally. Our theorem includes the focusing mass-supercritical regime
$p>4$ with weaker-power damping $q<p$, subject to
\eqref{eq:gap}. In the whole space, finite-time blow-up is known to persist in certain such regimes \cite{AnSh26Bu}. But most importantly, our result establishes relative homogenization near the homogeneous
family, with a clear separation between the decay rates of the
amplitude and the spatial variation.

\begin{theorem}\label{thm:main}
Let $p,q>0$, $\lambda\in\R$, and let $m(0)>0$ satisfy
\begin{equation}\label{eq:gap}
 4\pi^2+\lambda p m(0)^{p}>0.
\end{equation}
There exist
$\delta_*,\varepsilon_*,C>0$, depending only on $p,q,\lambda,m(0)$, such
that the following holds. Suppose
\begin{equation}
 \label{eq:paramcond}
  0<\varepsilon\le\varepsilon_*,
 \qquad \norm{u_0}_2=m(0),
 \qquad \norm{\xi(0)}_{H^1}\le\delta m(0),
 \qquad 0\le\delta\le\delta_*.
\end{equation}
Then \eqref{eq:nls} has a unique global solution in
$C([0,\infty);H^1(\T))$. Its spatial mean never vanishes, so
\eqref{eq:phasechoice} defines a representative of the minimizing phase $\gamma \in C^1([0,\infty),\R)$. Moreover the following properties are satisfied:

The mass follows closely the adiabatic law:
\begin{equation}\label{eq:masscorrection}
 0 \le m(t)^{-q}-m(0)^{-q}-q\varepsilon t \le C\delta^2m(0)^{-q} \log\bigl(1+q\varepsilon m(0)^{q}t\bigr).
\end{equation}
The $H^1$ norm has the adiabatic decay rate 
\begin{equation}\label{eq:massasymptotic}
 \lim_{t\to\infty}[q\varepsilon t]^{1/q} \norm{u(t)}_{2}=1, \qquad \lim_{t\to\infty}[q\varepsilon t]^{1/q} \norm{u(t)}_{H^1}=1.
\end{equation}
For the kinetic part, if $u_0$ is non-constant, then there exists $\ell>0$ such that
\begin{equation}\label{eq:kineticasymptotic}
 \frac{\norm{\partial_xu(t)}_2^2}{\varepsilon m(t)^{q+2}}
  \longrightarrow2\ell, \qquad \lim_{t\to\infty}[q\varepsilon t]^{(q+2)/q}
   \frac12\norm{\partial_xu(t)}_2^2=\ell\varepsilon.
\end{equation}
For constant data, the kinetic energy vanishes identically.

Finally, the solution converges to \eqref{eq:homogeneous} in the following sense
\begin{equation}\label{eq:relativebound}
 \frac{\norm{\xi(t)}_{H^1}}{m(t)}
 \le
 \frac{C\delta}
 {\bigl(1+q\varepsilon m(0)^{q}t\bigr)^{1/2}}.
\end{equation}
\end{theorem}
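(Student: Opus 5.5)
The plan is to work in rescaled variables adapted to the decaying amplitude, and to run a bootstrap on the relative perturbation $\eta(t)=\xi(t)/m(t)$ in $H^1$. I introduce the slow time $\tau$ with $d\tau=\varepsilon m(t)^q\,dt$; by the mass identity, $d(m^{-q})/dt=q\varepsilon\norm{u}_{q+2}^{q+2}/m^{q+2}$. If $\eta$ stays small, this gives $m^{-q}\approx m(0)^{-q}+q\varepsilon t$, so $\tau\approx q^{-1}\log(1+q\varepsilon m(0)^q t)$.

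Writing $u=e^{i\gamma}m(1+\eta)$, I will derive the equation for $\eta$. It has the form
\[
i\partial_t\eta-\partial_x^2\eta+\lambda m^p L_p\eta+i\varepsilon m^q\bigl(L_q\eta-\mu\eta\bigr)=\dot\gamma\text{-terms}+N(\eta),
\]
where $L_r\eta=\eta+\tfrac r2(\eta+\bar\eta)$ is the linearization of $|1+\eta|^r(1+\eta)$ minus its constant part, and $\mu=\norm{1+\eta}_{q+2}^{q+2}$ comes from $\dot m/m$. The orthogonality conditions $\Im\int\eta=0$ (phase choice) and $\Re\int\eta=-\tfrac12\norm{\eta}_2^2$ (mass normalization) remove the zero mode up to quadratic errors.

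The central object is the modified quadratic energy
\[
E(\eta)=\tfrac12\norm{\partial_x\eta}_2^2+\tfrac{\lambda p}{2}\, m^p\norm{\Re\eta}_2^2+\tfrac12\norm{\eta}_2^2\cdot c,
\]
possibly with higher-order corrections taken from the conserved energy of \eqref{eq:NLSNO} evaluated on $u/m$. On mean-zero functions, \eqref{eq:gap} together with $m(t)\le m(0)$ gives coercivity $E\gtrsim\norm{\eta}_{H^1}^2$ uniformly in $t$. This holds even when $\lambda<0$, because $m^p$ only decreases.

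I then compute $dE/dt$. The Hamiltonian part cancels to quadratic order. Time-dependence of the coefficient enters through $\lambda p\, m^{p-1}\dot m\norm{\Re\eta}^2$, which has a favorable sign when $\lambda<0$ and is otherwise $O(\varepsilon m^q E)$. The main contribution is the damping. Since $\int(L_q\eta-\mu\eta)\bar\eta$ projected off constants gives $\Re$-part coefficient $q$ and $\Im$-part coefficient $0$, I expect
\[
\frac{dE}{dt}\le -c\,\varepsilon m^q E\;+\;C\varepsilon m^q\norm{\eta}_{H^1}^3.
\]

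\textbf{Main obstacle.} The damping acts only on $\Re\eta$ (amplitude), not on the phase $\Im\eta$. Decay of $\Im\eta$ must therefore be transferred through the Schrödinger rotation, which is fast (frequency $\ge 4\pi^2$) compared with the damping rate $\varepsilon m^q$. I would handle this with a hypocoercive correction term $\alpha\,\varepsilon m^q\,\Re\int \Im\eta\,\overline{(\text{something like }(-\partial_x^2)^{-1}\Re\eta)}$ added to $E$, with $\alpha$ small, so that averaging over the rotation yields dissipation $\gtrsim\varepsilon m^q E$ with rate constant exactly $1$ in the $\tau$ variable. Smallness of $\varepsilon_*$ is used here: errors of relative size $\varepsilon m^q/4\pi^2$ must be absorbed.

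Getting the exponent $1/2$ in \eqref{eq:relativebound} requires the dissipation constant to be such that $E(\tau)\lesssim \delta^2 e^{-q\tau}$, i.e. $E\lesssim\delta^2(1+q\varepsilon m(0)^q t)^{-1}$. I expect this rate to match the averaged damping on the rotating pair, which loses half of the rate $q$ of $\Re\eta$. With the bootstrap assumption $\norm{\eta}_{H^1}\le 2C\delta$, Gronwall closes the estimate for $\delta_*$ small. Local well-posedness in $H^1$ plus the a priori bound (which controls $\norm{u}_{H^1}\le m(1+C\delta)$) gives global existence and uniqueness. Nonvanishing of the mean follows from $\int(1+\eta)\ge 1-C\delta$.

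The remaining consequences follow from this decay estimate.

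\emph{Mass.} $m^{-q}$ has derivative $q\varepsilon\bigl(1+O(\norm{\eta}_{H^1}^2)\bigr)$, and the correction is nonnegative by Hölder. Integrating $\delta^2\varepsilon(1+q\varepsilon m(0)^qt)^{-1}$ gives \eqref{eq:masscorrection}, and dividing by $q\varepsilon t$ gives \eqref{eq:massasymptotic}. The $H^1$ statement follows since $\norm{\partial_x u}_2=m\norm{\partial_x\eta}_2\to0$ relative to $m$.

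\emph{Kinetic energy.} For \eqref{eq:kineticasymptotic}, I would refine the energy estimate to an asymptotic identity $\frac{d}{d\tau}\log E=-1+O(\delta e^{-q\tau/2})$, after suitable normalization. Integrating shows that $E e^{\tau}$, equivalently $\norm{\partial_x u}^2/(\varepsilon m^{q+2})$ after conversion, has a limit $2\ell$.

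For $\ell>0$, the integrable error gives $\log(Ee^\tau)$ bounded below as long as $E(0)>0$, which holds for nonconstant data. Constant data stay constant by uniqueness. Converting $m^{q+2}\sim(q\varepsilon t)^{-(q+2)/q}$ gives the second limit in \eqref{eq:kineticasymptotic}.
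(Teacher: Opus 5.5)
\textbf{There is a genuine gap.} Your architecture is the right one: a relative variable, coercivity from \eqref{eq:gap}, a mixed term that transfers the damping from $\Re\eta$ to $\Im\eta$, then integration of a logarithmic derivative in $\tau$. However, the correction you specify cannot deliver the exponent $1/2$ or the limit $\ell$, for two reasons.

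First, $(-\partial_x^2)^{-1}$ is at the wrong regularity. Under the leading flow $\partial_t\Re\eta=\partial_x^2\Im\eta$, $\partial_t\Im\eta=-\partial_x^2\Re\eta$, its derivative produces $\norm{\Re\eta}_2^2-\norm{\Im\eta}_2^2$. That cannot control $\norm{\partial_x\Im\eta}_2^2$ in your $H^1$-level energy at high frequencies. For an energy weighted by $k^2$, the rotation frequency $k^2$ cancels the weight, so the right multiplier is the identity, as in \eqref{eq:corrected}.

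Second, even with the identity multiplier, a small coefficient gives a non-sharp rate. The damping produces $-q\varepsilon m^q\norm{\partial_x\Re\eta}_2^2$. A term $\beta\varepsilon m^q\int_0^1\Re\eta\,\Im\eta\,dx$ contributes $\beta\varepsilon m^q(\norm{\partial_x\Re\eta}_2^2-\norm{\partial_x\Im\eta}_2^2)$, up to $\lambda p m^{p+q}$ terms and cubic errors. The total dissipation is therefore $-\varepsilon m^q[(q-\beta)\norm{\partial_x\Re\eta}_2^2+\beta\norm{\partial_x\Im\eta}_2^2]$, a rate $2\min(\beta,q-\beta)$ relative to $E\approx\frac12\norm{\partial_x\eta}_2^2$. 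With $\alpha$ small you only get $E\lesssim e^{-2\alpha\tau}$, i.e.\ relative decay $t^{-\alpha/q}$. A one-sided inequality also gives no limit for $E/(\varepsilon m^q)$.

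The missing idea is to take the coefficient \emph{exactly} $q/2$. Then the dissipation is the identity $-\frac q2\varepsilon m^q\norm{\partial_x\eta}_2^2=-q\varepsilon m^q\cdot\frac12\norm{\partial_x\eta}_2^2$. This gives the two-sided estimate \eqref{eq:mainerror}, $|\dot{\mathcal H}+q\varepsilon m^q\mathcal H|\le(\text{time-integrable rate})\cdot\mathcal H$, from which both the sharp rate and $\ell\in(0,\infty)$ follow. The correction is small because of the factor $\varepsilon m^q$, not because of its coefficient.

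A further gap concerns the cubic error. The factor $\varepsilon m^q$ you put in front of it is available only if $E$ is the full nonlinear quantity $\mathcal L/m^2$ from \eqref{eq:energy}, whose derivative along the flow contains only damping terms. This is not an optional refinement. With the quadratic form alone, the Hamiltonian part leaves errors of size $|\lambda|m^p\norm{\eta}_{H^1}^3$ with no factor $\varepsilon$. Your extra term $\frac c2\norm{\eta}_2^2$ is worse: its derivative contains $c\lambda p m^p\int_0^1\Re\eta\,\Im\eta\,dx$ already at quadratic order. Neither kind of term can be absorbed into $-\varepsilon m^qE$, or integrated in time, unless $\delta$ (respectively $c$) is $O(\varepsilon)$. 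That destroys the independence of $\delta_*$ from $\varepsilon$.

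There are also two smaller slips.
\begin{itemize}
\item With $d\tau=\varepsilon m^q\,dt$ one has $m^q\asymp m(0)^qe^{-q\tau}$. So the identity to prove is $\frac d{d\tau}\log E=-q+(\text{integrable})$, and the convergent quantity is $Ee^{q\tau}$, not $Ee^{\tau}$.
\item The $m^{p-1}\dot m$ term has the \emph{unfavourable} sign when $\lambda<0$. It is harmless only because $\varepsilon\int_0^\infty m^{p+q}\,dt\le m(0)^p/p$.
\end{itemize}
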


The central conclusion is that the spatial perturbation becomes small
relative to the background itself. Estimate \eqref{eq:relativebound}
gives a relative decay rate $t^{-1/2}$ and, for fixed
$\varepsilon>0$, the absolute bound
\[
 \norm{\xi(t)}_{H^1}
 \le C(\varepsilon)\delta(1+t)^{-1/q-1/2}.
\]
The relative exponent $1/2$ is independent of the conservative and
dissipative powers and of the sign of $\lambda$ even though the constants and the admissible neighborhood still depend on these parameters. 

The kinetic-energy limit makes this separation of scales sharp.
For every nonconstant initial datum in the stated neighborhood,
\eqref{eq:massasymptotic} and \eqref{eq:kineticasymptotic} imply
\[
 \lim_{t\to\infty}t\,
  \frac{\norm{\partial_xu(t)}_2^2}{m(t)^2}
  =\frac{2\ell}{q}>0.
\]
Thus the kinetic energy decays one power of time faster than the
squared $L^2$ norm, with a positive leading coefficient depending on
the solution. Since $\partial_xu=e^{i\gamma}\partial_x\xi$, this
also shows that the relative exponent $1/2$ cannot be improved for
such data. At the same time, $m/m_{\mathrm{ad}}\to1$: the leading
coefficient of the amplitude decay agrees with that of the 
homogeneous solution, despite the initial spatial perturbation.

We also compare
$\gamma$ with the reference phase $\Phi$ defined in \eqref{eq:homogeneous}. The following corollary
shows that their difference converges to a constant. The limiting
phase $\gamma_\infty$ in the statement below is explicitly defined in \eqref{eq:gammainfty}. Finally, we explicitly give a convergence rate towards the homogeneous solution.

\begin{corollary}\label{cor:phase}
Under the assumptions of Theorem~\ref{thm:main}, there exists
$\gamma_\infty\in\R$ and $C(\varepsilon) >0$, such that, as $t \to \infty$, 
\begin{equation}\label{eq:phaselimit}
|\gamma(t)-\Phi(t)-\gamma_\infty|
\le C(\varepsilon) \delta^2 \left[
(1+t)^{-p/q}\log(2+t)+(1+t)^{-1}
\right] \longrightarrow 0.
\end{equation}
Furthermore, 
\begin{multline}\label{eq:orbitlimit}
 \left\|u(t)-m_{\mathrm{ad}}(t)
   e^{i(\gamma_\infty+\Phi(t))}\right\|_{H^1}\\
 \quad \le C(\varepsilon)(1+t)^{-1/q}\Bigl[
  \delta(1+t)^{-1/2}
  +\delta^2(1+t)^{-p/q}\log(2+t)
  +\delta^2(1+t)^{-1}\bigl(1+\log(2+t)\bigr)\Bigr] \longrightarrow 0.
\end{multline}
\end{corollary}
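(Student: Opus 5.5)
We derive an ODE for the phase $\gamma$ from the decomposition \eqref{eq:decomposition}, compare it with $\Phi'=\lambda m_{\mathrm{ad}}^p$, and integrate the difference using the decay rates of Theorem~\ref{thm:main}. Write $a(t)=\int_0^1 u\,dx=e^{i\gamma}(m+\bar\xi)$, where $\bar\xi=\int_0^1\xi\,dx$ is real and nonnegative, because $\gamma=\arg a$. Integrating \eqref{eq:nls} over $\T$ kills the Laplacian and gives
\[
\dot a=i\lambda\int_0^1|u|^pu\,dx-\varepsilon\int_0^1|u|^qu\,dx .
\]
Since $\gamma=\arg a$, we have $\dot\gamma=\Im(\dot a\,\bar a)/|a|^2=\Im(e^{-i\gamma}\dot a)/(m+\bar\xi)$. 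This yields
\[
\dot\gamma=\frac{\lambda\,\Re\int|m+\xi|^p(m+\xi)\,dx+\varepsilon\,\Im\int|m+\xi|^q(m+\xi)\,dx}{m+\bar\xi}\cdot\frac{1}{1}\quad(\text{with the }\varepsilon\text{ term entering with sign }-\Im(\cdot)).
\]
We then expand around $m$. The zeroth-order term is $\lambda m^p$. The first-order terms are $\lambda(p+1)m^p\Re\bar\xi$ in the numerator, against $\bar\xi$ in the denominator. Here the identity $\|u\|_2^2=m^2$ gives $2m\bar\xi+\|\xi\|_2^2=0$, so $\bar\xi=-\|\xi\|_2^2/(2m)$ is quadratic. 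The $\varepsilon$-term at first order is $\varepsilon m^q\Im\bar\xi=0$, since $\bar\xi$ is real. Consequently
\[
\dot\gamma=\lambda m^p+O\!\left(m^{p}\|\xi\|_{H^1}^2/m^2\right)+O\!\left(\varepsilon m^q\|\xi\|_{H^1}^2/m^2\right).
\]
We use $H^1\subset L^\infty$ and the smallness $\|\xi\|_{H^1}/m\le C\delta$ to Taylor-expand $|1+z|^pz$ uniformly. The second error is at least quadratic because $\Im\int$ of the linear part vanishes.

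Next we estimate $\dot\gamma-\dot\Phi=\lambda(m^p-m_{\mathrm{ad}}^p)+R$. For the remainder, \eqref{eq:relativebound} gives $\|\xi\|_{H^1}^2/m^2\le C\delta^2(1+q\varepsilon m(0)^qt)^{-1}$. Combined with $m\sim (q\varepsilon t)^{-1/q}$, this yields $|R|\le C(\varepsilon)\delta^2[(1+t)^{-p/q-1}+(1+t)^{-2}]$, which is integrable. For the mass term, \eqref{eq:masscorrection} gives $m^{-q}=m_{\mathrm{ad}}^{-q}+E$ with $0\le E\le C\delta^2 m(0)^{-q}\log(1+q\varepsilon m(0)^qt)$. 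By the mean value theorem applied to $s\mapsto s^{-p/q}$,
\[
|m^p-m_{\mathrm{ad}}^p|\le C\,m_{\mathrm{ad}}^{p+q}E\le C(\varepsilon)\delta^2(1+t)^{-p/q-1}\log(2+t),
\]
which is also integrable. We define
\[
\gamma_\infty:=\gamma(0)+\int_0^\infty(\dot\gamma-\dot\Phi)\,ds .
\]
The convergence rate \eqref{eq:phaselimit} then follows by integrating the tail from $t$ to $\infty$. The tail of $(1+s)^{-p/q-1}\log(2+s)$ integrates to $(1+t)^{-p/q}\log(2+t)$, and the tail of $(1+s)^{-2}$ integrates to $(1+t)^{-1}$. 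The regularity $\gamma\in C^1$ stated in the theorem justifies differentiating.

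For \eqref{eq:orbitlimit}, we decompose
\[
u-m_{\mathrm{ad}}e^{i(\gamma_\infty+\Phi)}=e^{i\gamma}\xi+e^{i\gamma}(m-m_{\mathrm{ad}})+m_{\mathrm{ad}}\bigl(e^{i\gamma}-e^{i(\gamma_\infty+\Phi)}\bigr).
\]
The first piece is bounded in $H^1$ by $m\cdot C\delta(1+t)^{-1/2}$, using \eqref{eq:relativebound}. The second is a constant function, and it is bounded by $C m_{\mathrm{ad}}^{1+q}E\le C(\varepsilon)(1+t)^{-1/q}\delta^2(1+t)^{-1}\log(2+t)$. The third is bounded by $m_{\mathrm{ad}}|\gamma-\Phi-\gamma_\infty|$ together with \eqref{eq:phaselimit}. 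Summing these three bounds gives \eqref{eq:orbitlimit}.

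The main obstacle is the expansion of $\dot\gamma$. We must verify carefully that every term linear in $\xi$ either cancels or is quadratic through $\bar\xi=-\|\xi\|_2^2/(2m)$, since a surviving linear term would only give a $t^{-1/2}$ relative error, which is not integrable. We also need the Taylor remainders to be uniform for noninteger $p$ and $q$, which the $L^\infty$ smallness of $\xi/m$ supplies.
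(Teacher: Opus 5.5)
Your proposal is correct and follows essentially the paper's argument. Your mean-based phase equation is equivalent to \eqref{eq:phaseexact} and reproduces \eqref{eq:phaseerror}. The split $\dot\gamma-\dot\Phi=(\dot\gamma-\lambda m^p)+\lambda(m^p-m_{\mathrm{ad}}^p)$, the tail integration defining $\gamma_\infty$, and the three-term triangle inequality for \eqref{eq:orbitlimit} are the same as in the paper. The only variation is that you apply the mean value theorem to $s\mapsto s^{-p/q}$ (and $s\mapsto s^{-1/q}$) at $s=m^{-q}$, where the paper uses \eqref{eq:amplituderatio}.

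One slip to fix: $\bar\xi=-\|\xi\|_2^2/(2m)$ is nonpositive, not nonnegative; what $\gamma=\arg a$ gives is $m+\bar\xi=|a|>0$. Also, the $\varepsilon$-term in your displayed formula for $\dot\gamma$ should simply be written with its minus sign.
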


\begin{remark}\label{rem:mean}
The constants in the theorem \ref{thm:main} and Corollary \ref{cor:phase} can be chosen uniformly when $m(0)$ varies
in a compact subset of $(0,\infty)$ on which \eqref{eq:gap} is satisfied. 
We also remark that if \(u_0\) has zero mean, then it lies outside the small neighborhood considered in this theorem. Indeed, for every \(\theta\),
\[
\|e^{-i\theta}u_0-m(0)\|_2^2
=2m(0)^2
-2m(0)\Re\left(e^{-i\theta}\int_0^1u_0\,dx\right)
=2m(0)^2.
\]
Consequently,
\[
\frac{\|\xi(0)\|_{H^1}}{m(0)}\ge\sqrt2.
\]
\end{remark}

\subsection{Nonzero carrier modes}
If $k\in2\pi\mathbb Z$ and $u$ solves \eqref{eq:nls}, direct
substitution shows that
\[
 U(t,x)=e^{i(kx+k^2t)}u(t,x+2kt)
\]
is also a periodic solution. The theorem therefore applies near any
pure Fourier mode after this change of coordinates. The faster decay
law concerns
\[
 \norm{(\partial_x-ik)U}_2=\norm{\partial_xu}_2.
\]
The ordinary derivative $\partial_xU$ also contains the carrier term
$ikU$, so its norm need not have this faster decay.

\subsection{Relation to previous work} We refer to the monograph \cite{SS} and references therein for early works on the dissipative NLS \eqref{eq:nls}.
Antonelli and Sparber \cite{ASP} and 
Antonelli, Carles and Sparber \cite{ACS} studied NLS with nonlinear damping, investigating the global well-posedness of the model. Rapti et al. \cite{RKFM} investigate linear stability of one specific single-Fourier-mode perturbation of \eqref{eq:homogeneous}. For this single-mode ansatz, they obtain explicit solutions in certain parameter regimes, with a relative mode amplitude that oscillates within an envelope decaying as $t^{-1/2}$. For more general parameter choices, their numerical simulations indicate that any initial growth of the perturbation is transient and that the dynamics eventually becomes stable. Theorem \ref{thm:main} establishes nonlinear and asymptotic stability in this direction for a full neighborhood of sufficiently small perturbations.

We mention early works of Weinstein on Lyapunov stability \cite{W86} for the ground state in the Hamiltonian case \eqref{eq:NLSNO} and the modulational stability in \cite{W85}, where \eqref{eq:NLSNO} is perturbed by a small term. In particular, applying the modulational argument, one can expect that an adaptation of \cite{W85} would yield (absolute) stability for times of order $1/\varepsilon$, hence, in general, not for all times.
Antonelli and Shakarov \cite{AS} developed a Lyapunov approach to stability under linear damping ($q = 0$ in \eqref{eq:nls}) for cnoidal waves of the cubic NLS on the torus. Their approach builds a perturbative Lyapunov, incorporating the first-order perturbation in the functional itself to control the remainder in absolute terms for all times. Linear damping alone does not imply relative homogenization.
Indeed, for $\lambda=0$ and linear damping, the squared
$L^2$ norm and the kinetic energy both decay as
$e^{-2\varepsilon t}$, so their ratio remains constant.The additional decay in Theorem~\ref{thm:main} describes the effect of
amplitude-dependent damping.

Decay in the whole space involves the interaction between spatial
spreading and dissipation. Many quantitative decay results therefore
use spatial localization assumptions on the initial data, expressed
through weighted norms or moment conditions. For example, Kita,
Miyazaki and Sato \cite{KMS,KMS26} establish $L^2$ decay estimates
for $H^1(\R^d)$ data satisfying $|x|u_0\in L^2(\R^d)$.
Cazenave, Han and Naumkin \cite{CHN21} obtain precise asymptotic
profiles for a class of localized oscillatory data, including an
exact $L^\infty$ decay law with a leading coefficient independent
of the initial datum. Their $L^2$ decay rate, however, depends on
the spatial decay of the initial profile. In general, for the same damping power, the $L^2$ decay rates are slower than the $t^{-1/q}$ law established
in Theorem~\ref{thm:main} reflecting the role of spatial spreading, which can weaken nonlinear dissipation at a given mass. For further whole-space decay estimates, optimality
results and asymptotic descriptions, see
\cite{Shim06,KS09,HLN16,KS22a,KS22b,GKS24,GKS25,KS23Bor}
and the references therein.

Damped NLS equations are also studied in the presence of an
additional forcing term. Forcing can balance dissipation and
sustain nonzero stationary states, as in the Lugiato--Lefever
equation; see \cite{HJPR23,HJPR24,Zum24,BR25,Alex26} and references
therein. G\"artner, Mandel and Reichel \cite{GMR22} include cubic
nonlinear damping in this model and prove convergence to
spatially constant states for sufficiently strong nonlinear
damping. In this case, as the limiting state is nonzero, convergence does not require a relative formulation.

We also mention that the classical Grillakis, Shatah and Strauss \cite{GSS} theory for Hamiltonian equations such as \eqref{eq:NLSNO} does not apply directly to this evolution. The coercivity
principle remains useful, but it must be combined with differential estimates along the dissipative flow. Here these estimates control the error relative to the vanishing amplitude and prove convergence of
the normalized profile.

\subsection{Notation and idea of the proof}
We use the norms
\[
 \norm f_k^k=\int_0^1|f(x)|^k\,dx,\qquad
 \norm f_{H^1}^2=\norm f_2^2+\norm{\partial_xf}_2^2.
\]
The real and imaginary parts are denoted by $\Re$ and $\Im$, and a dot
denotes a time derivative.

Constants may depend on the fixed parameters $p,q,\lambda,m(0)$;
constants denoted by $C(\varepsilon)$ may also depend on the fixed
positive damping parameter. 

The proof starts from the energy difference $\mathcal L$ defined in \eqref{eq:energy}. The spectral condition \eqref{eq:gap} (for $\lambda <0$) makes this
quantity comparable to $\norm\xi_{H^1}^2$ near the constant orbit.
Dividing by $m^2$ therefore measures the relative perturbation.
The leading dissipative term of $\mathcal L$ in its derivative controls only
$\norm{\partial_x\Re\xi}_2^2/m^2$. To recover control of the
imaginary component, we add the mixed term
\[
 \frac q2\varepsilon m^q
  \int_0^1\frac{\Re\xi}{m}\frac{\Im\xi}{m}\,dx.
\]
Its derivative contributes the difference between the real and
imaginary gradient energies. The remaining conservative contribution is controlled by a coefficient
proportional to $\varepsilon|\lambda|m^{p+q}$, whose time integral
is finite by the mass law. The corrected energy $\mathcal H$ defined
in \eqref{eq:corrected} consequently satisfies
\eqref{eq:mainerror}.

We first use this estimate to keep the relative perturbation small
and continue the solution globally. The resulting preliminary decay
makes the error terms in the logarithmic derivative of
$\mathcal H/(\varepsilon m^q)$ integrable. It follows that this
quotient converges to some $\ell>0$ for non-constant data. Together with
the mass asymptotic, this gives
\[
 \lim_{t\to\infty}t\varepsilon m(t)^q=\frac1q,
 \qquad
 \lim_{t\to\infty}t\mathcal H(t)=\frac\ell q.
\]
Since $\mathcal H$ is comparable to
$\norm\xi_{H^1}^2/m^2$, these relations explain why the relative
decay exponent is $1/2$ for every $p,q>0$. The power $q$ determines
the amplitude decay, while the leading damping coefficient
$q\varepsilon m^q$ in the corrected energy equation is asymptotically proportional to $1/t$. The kinetic-energy limit follows from comparison with
the gradient part of $\mathcal H$; the mass correction and phase
convergence follow from their respective evolution equations.
Because of the Sobolev embedding $H^1(\T) \hookrightarrow L^\infty(\T)$, the nonlinearities are expanded around $1$, with $\xi/m$ as the
perturbation. Since $1+\xi/m$ stays separated from zero for $\left| \xi/m \right|$ sufficiently small, these
expansions are valid for arbitrary real powers $p,q>0$.

The rest of this work is organized as follows. Section~\ref{sec:preliminary} establishes the mass, phase and coercivity
estimates. Section~\ref{sec:decay} proves relative stability and the
sharp decay bound. The final section \ref{sec:proof} derives the mass, kinetic-energy, and phase asymptotics, concluding the proof of Theorem \ref{thm:main} and Corollary \ref{cor:phase}.

\section{Preliminary estimates} \label{sec:preliminary}
\begin{lemma}\label{lem:mass}
As long as the solution exists, \eqref{eq:upper} holds and
\begin{equation}\label{eq:massderivative}
 \dot m=-\frac{\varepsilon}{m}\norm{m+\xi}_{q+2}^{q+2},\qquad
 \frac d{dt}m^{-q}
  =q\varepsilon\frac{\norm{m+\xi}_{q+2}^{q+2}}{m^{q+2}}.
\end{equation}
\end{lemma}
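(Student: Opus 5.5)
We start from the mass identity. For an $H^1$ solution of \eqref{eq:nls} we have
\[
\frac{d}{dt}m^2=-2\varepsilon\norm u_{q+2}^{q+2}.
\]
To derive it, multiply \eqref{eq:nls} by $\bar u$, integrate over $\T$, and take the imaginary part. The dispersive and conservative terms are real after integration, so they drop out, and only the damping term survives. If needed, this is justified first for smooth data and then extended by continuous dependence in $H^1$.

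The next step expresses the right-hand side in terms of $\xi$. By the decomposition \eqref{eq:decomposition},
\[
|u|=|m+\xi|,
\]
since the factor $e^{i\gamma}$ is unimodular; this holds whatever $\gamma$ is, so no regularity of $\gamma$ is needed here. Hence $\norm u_{q+2}=\norm{m+\xi}_{q+2}$. As long as $m>0$, the rule $\frac{d}{dt}m^2=2m\dot m$ gives the first formula in \eqref{eq:massderivative}. Then
\[
\frac{d}{dt}m^{-q}=-q m^{-q-1}\dot m
\]
yields the second formula. Because $m$ is nonincreasing and $m(0)>0$, we do need to check that $m$ stays positive on the existence interval; the comparison argument below gives this, since it keeps $m^{-q}$ finite on every bounded time interval.

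For \eqref{eq:upper} we use H\"older's inequality on the probability space $\T=[0,1)$:
\[
\norm{u}_2\le\norm{u}_{q+2}, \quad\text{hence}\quad \norm{m+\xi}_{q+2}^{q+2}\ge m^{q+2}.
\]
Inserting this into the second formula gives $\frac{d}{dt}m^{-q}\ge q\varepsilon$. Integrating from $0$ to $t$,
\[
m(t)^{-q}\ge m(0)^{-q}+q\varepsilon t=m_{\mathrm{ad}}(t)^{-q},
\]
which is \eqref{eq:upper}. In the same way, the first formula together with $\norm{m+\xi}_{q+2}^{q+2}\le C\norm{u}_{H^1}^{q+2}$ (Sobolev embedding) bounds $|\dot m|/m$ on compact time intervals, so $m$ cannot reach zero in finite time.

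No step is a real obstacle. The only delicate point is justifying the $L^2$ identity at $H^1$ regularity, which is handled by the regularization-and-continuity argument described in the first paragraph.
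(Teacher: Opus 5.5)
Your proof of the identities and of \eqref{eq:upper} is correct and is the paper's argument: pair \eqref{eq:nls} with $u$, take the imaginary part, use $|u|=|m+\xi|$ and the chain rule, then apply H\"older's inequality $\norm{u}_2\le\norm{u}_{q+2}$ on the unit-length torus and integrate $\frac{d}{dt}m^{-q}\ge q\varepsilon$. Your side remark on the positivity of $m$ is wrong as written, but it is easily repaired: the comparison bound is a lower bound on $m^{-q}$, so it cannot keep $m^{-q}$ finite, and $\norm{u}_{q+2}^{q+2}\le C\norm{u}_{H^1}^{q+2}$ does not control $|\dot m|/m=\varepsilon\norm{u}_{q+2}^{q+2}/m^2$; instead use $\norm{u}_{q+2}^{q+2}\le\norm{u}_\infty^{q}m^2$, which gives $\frac{d}{dt}m^2\ge-2\varepsilon\norm{u}_\infty^{q}m^2$ and hence $m>0$ by Gr\"onwall (the paper obtains positivity later, from \eqref{eq:masssandwich}).
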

\begin{proof}
Take the imaginary part of the $L^2$ pairing of \eqref{eq:nls} with $u$ to obtain the 
\eqref{eq:massderivative}. Since $\norm u_{q+2}\ge\norm u_2=m$, integrating this inequality gives \eqref{eq:upper}.
\end{proof}

\begin{lemma}\label{lem:orthogonality}
The perturbation in \eqref{eq:decomposition} satisfies
\begin{equation}\label{eq:orthogonality}
 \int_0^1\Im\xi\,dx=0,\qquad
 2m\int_0^1\Re\xi\,dx=-\norm\xi_2^2.
\end{equation}
If $\norm\xi_{H^1}< \rho m$ for $0 \leq \rho <2$ then 
\begin{equation}\label{eq:poincare}
 \norm{\partial_x\xi}_2^2
 \le\norm\xi_{H^1}^2
 \le C\norm{\partial_x\xi}_2^2.
\end{equation}
\end{lemma}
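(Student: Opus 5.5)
The orthogonality relations \eqref{eq:orthogonality} come from the first-order optimality of the minimizing phase together with the normalization $\norm u_2=m$. Write $\int_0^1 u\,dx = M e^{i\gamma}$, so that $\int_0^1 e^{-i\gamma}u\,dx = M$ is real and nonnegative. By \eqref{eq:phasechoice}, $\gamma$ maximizes $\theta\mapsto\Re(e^{-i\theta}\int_0^1 u\,dx)$. Since $m$ is real,
\[
\int_0^1\Im\xi\,dx=\Im\int_0^1 e^{-i\gamma}u\,dx=0 .
\]
If the mean vanishes, any $\gamma$ gives zero imaginary part, so this case needs no separate treatment. For the second identity, expand
\[
m^2=\norm{e^{-i\gamma}u}_2^2=\norm{m+\xi}_2^2=m^2+2m\int_0^1\Re\xi\,dx+\norm\xi_2^2,
\]
which gives $2m\int_0^1\Re\xi\,dx=-\norm\xi_2^2$.

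For \eqref{eq:poincare}, the left inequality is trivial. For the right one I would use Poincaré--Wirtinger on $\T$, with constant $4\pi^2$, applied to $\xi-\bar\xi$, where $\bar\xi=\int_0^1\xi\,dx$:
\[
\norm\xi_2^2=\norm{\xi-\bar\xi}_2^2+|\bar\xi|^2\le \frac1{4\pi^2}\norm{\partial_x\xi}_2^2+|\bar\xi|^2 .
\]
By the orthogonality relations, $\bar\xi$ is real and $|\bar\xi|=\norm\xi_2^2/(2m)$. Hence
\[
|\bar\xi|^2=\frac{\norm\xi_2^2}{4m^2}\norm\xi_2^2\le \frac{\rho^2}{4}\norm\xi_2^2,
\]
using $\norm\xi_2\le\norm\xi_{H^1}<\rho m$. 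Since $\rho<2$, the factor $\rho^2/4<1$ can be absorbed:
\[
\Bigl(1-\frac{\rho^2}{4}\Bigr)\norm\xi_2^2\le\frac1{4\pi^2}\norm{\partial_x\xi}_2^2 .
\]
This yields $\norm\xi_{H^1}^2\le\bigl(1+(4\pi^2(1-\rho^2/4))^{-1}\bigr)\norm{\partial_x\xi}_2^2$, so $C$ depends only on $\rho$.

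The main point is the absorption step, which is exactly where the hypothesis $\rho<2$ enters. The mean of $\xi$ is quadratically small relative to $m$, so for $\rho<2$ it cannot carry all of the $L^2$ norm. The rest is routine.
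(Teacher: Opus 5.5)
Your proof is correct and follows the paper's argument. The first identity comes from the maximizing choice of phase, the second from expanding $\norm{m+\xi}_2^2=m^2$, and the upper bound in \eqref{eq:poincare} from Poincar\'e--Wirtinger with $|\bar\xi|^2=\norm\xi_2^4/(4m^2)$ absorbed using $\rho<2$; you simply add details the paper leaves implicit, namely the zero-mean case and the explicit constant $C=1+\bigl(4\pi^2(1-\rho^2/4)\bigr)^{-1}$.
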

\begin{proof}
The first identity in \eqref{eq:orthogonality} follows from \eqref{eq:phasechoice}, and the second follows
by expanding $\norm{m+\xi}_2^2=m^2$. To show the second inequality in \eqref{eq:poincare}, we exploit Poincar\'e's inequality and \eqref{eq:orthogonality} to get
\[
 \norm\xi_2^2
 \le\frac1{4\pi^2}\norm{\partial_x\xi}_2^2
  +\left|\int_0^1\xi\,dx\right|^2
 =\frac1{4\pi^2}\norm{\partial_x\xi}_2^2
  +\frac{\norm\xi_2^4}{4m^2}.
\]
The last term is absorbed into the left-hand side when $\rho < 2$.
\end{proof}

\begin{lemma}\label{lem:expansions}
Suppose $\norm\xi_{H^1}\le\rho m$, with $\rho>0$ sufficiently small.
For every $k>0$,
\begin{equation}\label{eq:moment}
 \left|
 \frac{\norm{m+\xi}_{k+2}^{k+2}}{m^{k+2}}
 -1-\frac{k(k+2)}2\frac{\norm{\Re\xi}_2^2}{m^2}
 \right|
 \le C_k\frac{\norm\xi_{H^1}^3}{m^3}.
\end{equation}
In particular,
\begin{equation}\label{eq:momentbound}
 1\le\frac{\norm{m+\xi}_{q+2}^{q+2}}{m^{q+2}}
  \le1+C\frac{\norm\xi_{H^1}^2}{m^2}.
\end{equation}
The phase satisfies
\begin{equation}\label{eq:phaseerror}
 \left|\dot\gamma-\lambda m^{p}\right|
 \le C\bigl(|\lambda|m^{p}+\varepsilon m^{q}\bigr)
  \frac{\norm\xi_{2}^2}{m^2}.
\end{equation}
\end{lemma}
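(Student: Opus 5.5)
We write $w=\xi/m$, so that $\norm{w}_{H^1}\le\rho$. By the embedding $H^1(\T)\hookrightarrow L^\infty(\T)$, $\norm{w}_\infty\le C\rho$, and for $\rho$ small this gives $|1+w|\ge 1/2$ pointwise. The map $z\mapsto|1+z|^{k+2}$ is then smooth (real-analytic in $\Re z,\Im z$) on a neighborhood of $z=0$, for every real $k>0$. Writing $a=\Re w$ and $b=\Im w$, so that $|1+w|^2=1+2a+a^2+b^2$, we expand
\[
|1+w|^{k+2}=(1+2a+a^2+b^2)^{(k+2)/2}
=1+(k+2)a+\tfrac{k+2}{2}(a^2+b^2)+\tfrac{(k+2)k}{2}a^2+R,
\]
with $|R|\le C_k|w|^3$ uniformly on $|w|\le C\rho$. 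This is Taylor's formula applied to $(1+s)^{(k+2)/2}$ with $s=2a+a^2+b^2$; the term $\tfrac{(k+2)k}{8}s^2$ contributes $\tfrac{(k+2)k}{2}a^2$ plus cubic remainders. Integrating over $\T$ gives $\int|w|^3\le\norm{w}_\infty\norm{w}_2^2\le C\norm{w}_{H^1}^3$.

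The linear term is removed with Lemma \ref{lem:orthogonality}: by \eqref{eq:orthogonality}, $\int a\,dx=-\tfrac12\norm{w}_2^2$. Hence
\[
(k+2)\int a\,dx+\tfrac{k+2}{2}\int(a^2+b^2)\,dx=0,
\]
and what remains is exactly $1+\tfrac{k(k+2)}{2}\norm{\Re\xi}_2^2/m^2$ plus an $O(\norm\xi_{H^1}^3/m^3)$ error. This proves \eqref{eq:moment}. For \eqref{eq:momentbound}, the lower bound is Hölder's inequality on the probability space $\T$, namely $\norm{u}_{q+2}\ge\norm{u}_2=m$. The upper bound follows from \eqref{eq:moment} with $k=q$, using $\norm{\Re\xi}_2\le\norm\xi_{H^1}$ and $\norm\xi_{H^1}^3/m^3\le\rho\norm\xi_{H^1}^2/m^2$.

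For the phase we use the description following \eqref{eq:phasechoice}. The spatial mean $M(t)=\int_0^1u\,dx=e^{i\gamma}(m+\int\Re\xi)$ satisfies $|M|\ge m-\norm\xi_2^2/(2m)>0$, so $\gamma=\arg M$ is $C^1$ and
\[
\dot\gamma=\Im\bigl(\dot M\,\overline{M}\bigr)/|M|^2 .
\]
Integrating \eqref{eq:nls} over $\T$ kills the Laplacian and gives
\[
\dot M=i\lambda\int|u|^pu\,dx-\varepsilon\int|u|^qu\,dx .
\]
Multiplying by $e^{-i\gamma}$ yields
\[
\dot\gamma=\frac{\lambda\,\Re\int|m+\xi|^p(m+\xi)\,dx-\varepsilon\,\Im\int|m+\xi|^q(m+\xi)\,dx}{m+\int\Re\xi\,dx}.
\]
We now expand $|1+w|^k(1+w)$ to first order, with quadratic remainder controlled by $C|w|^2$:
\[
|1+w|^k(1+w)=1+\tfrac{k+2}{2}a+\tfrac{k}{2}\,a+i\,b+O(|w|^2)
=1+(k+1)a+ib+O(|w|^2).
\]
Since $\int b\,dx=0$ and $\int a\,dx=O(\norm w_2^2)$, the imaginary part of the damping integral is $m^{q+1}O(\norm w_2^2)$. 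The real part of the conservative integral is $m^{p+1}(1+O(\norm w_2^2))$. The denominator is $m(1+O(\norm w_2^2))$. Dividing gives \eqref{eq:phaseerror}.

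The one point requiring care is that every remainder must be bounded by $\norm\xi_2^2$ and not merely by $\norm\xi_{H^1}^2$. This holds because the pointwise Taylor remainders are $O(|w|^2)$ with constants uniform thanks to the $L^\infty$ bound, and $\int|w|^2=\norm w_2^2$. Beyond this, the proof is routine bookkeeping.
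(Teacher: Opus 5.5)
Your proof is correct and follows the paper's argument. It uses the same Taylor expansion of $|1+\xi/m|^{k+2}$ around $1$, with the linear and $|\xi|^2$ terms cancelled by \eqref{eq:orthogonality}, H\"older for the lower bound in \eqref{eq:momentbound}, and the same first-order expansions of the conservative and damping integrals for the phase. The only difference is cosmetic: you obtain the exact identity \eqref{eq:phaseexact} by differentiating the spatial mean $M=\int_0^1u\,dx$ directly, whereas the paper integrates the imaginary part of the $\xi$-equation \eqref{eq:xiequation}, and both routes yield the same identity.
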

\begin{proof}
The embedding $H^1(\T)\hookrightarrow L^\infty(\T)$ gives
$\norm\xi_\infty\le C\rho m$.
We choose $\rho$ so that $C\rho\le1/2$. Hence $|\xi(x)|\le m/2$
for every $x$, and 
\begin{equation}
 \label{eq:expansion1}
 \left|1 + \frac{\xi}{m}\right|^{k+2} = 1 + \frac{k+2}{m}\Re\xi + \frac{k+2}{2m^2}\vert{}\xi\vert{}^2 + \frac{k(k+2)}{2m^2}(\Re\xi)^2 + O\left(\frac{\vert{}\xi\vert{}^3}{m^3}\right)
\end{equation}
After integration, the linear and the $|\xi|^2$ term on the right-hand side of \eqref{eq:expansion1} cancel by \eqref{eq:orthogonality}, yielding \eqref{eq:moment} and the second inequality in \eqref{eq:momentbound}, while the first follows from $\norm{m+\xi}_{q+2}\ge\norm{m+\xi}_2=m$.

To derive the equation for $\dot\gamma$ we substitute $u=e^{i\gamma}(m+\xi)$ in
\eqref{eq:nls} to get
\begin{equation}
 \label{eq:xiequation}
  \partial_t\xi=-\dot m-i\dot\gamma(m+\xi)-i\partial_x^2\xi +i\lambda|m+\xi|^{p}(m+\xi) -\varepsilon|m+\xi|^{q}(m+\xi).
\end{equation}
Taking the imaginary parts gives
\[
 \partial_t\Im\xi= -\dot\gamma(m+\Re\xi)-\partial_x^2\Re\xi+\lambda|m+\xi|^{p}(m+\Re\xi) -\varepsilon|m+\xi|^{q}\Im\xi.
\]
Integrate over $[0,1]$. The left-hand side is zero by \eqref{eq:orthogonality}. The second-derivative term
integrates to zero by periodicity. Thus
\begin{equation}\label{eq:phaseexact}
 \left( m+\int_0^1\Re\xi\,dx \right) \dot\gamma= 
 \lambda\int_0^1|m+\xi|^{p}(m+ \Re \xi)\,dx
 -\varepsilon\int_0^1|m+\xi|^{q}(\Im \xi)\,dx.
\end{equation}
For $\rho\le1$, by \eqref{eq:orthogonality}, we have
\begin{equation}\label{eq:phasedenominator}
 m+\int_0^1\Re\xi\,dx
 =m-\frac{\norm\xi_2^2}{2m}
 \ge m\left(1-\frac{\rho^2}{2}\right)
 \ge\frac m2.
\end{equation}
By a Taylor expansion and exploiting \eqref{eq:orthogonality} we obtain
\[
\begin{split}
  \int_0^1|m+\xi|^{p}(m+\Re\xi)\,dx & =
 m^{p}\left(m+ (p+1) \int_0^1\Re\xi\,dx \right) + m^{p-1} O (\norm\xi_2^2) \\
 &= m^{p}\left(m+ \int_0^1\Re\xi\,dx \right) + m^{p-1} O (\norm\xi_2^2)
\end{split}
\]
For the second term on the right-hand side of \eqref{eq:phaseexact}, we observe that by \eqref{eq:orthogonality}, 
\[
 \left|\int_0^1|m+\xi|^{q}\Im\xi\,dx\right| =\left|\int_0^1\bigl(|m+\xi|^{q}-m^{q}\bigr)\Im\xi\,dx\right| \leq C m^{q-1}\norm\xi_2^2.
\]
This yields \eqref{eq:phaseerror}.
\end{proof}

Next, we introduce the energy and its difference from the energy of the constant state
\begin{equation}\label{eq:energy}
 E(u)=\frac12\norm{\partial_xu}_2^2+\frac\lambda{p+2}\norm u_{p+2}^{p+2},
 \qquad
 \mathcal L(t)=E(u(t))-\frac\lambda{p+2}m(t)^{p+2}.
\end{equation}

\begin{lemma}\label{lem:coercivity}
Assume \eqref{eq:gap}. For $\norm\xi_{H^1}\le\rho m$ with $\rho>0$
sufficiently small and $0<m\le m(0)$, there are constants $c,C>0$
such that
\begin{equation}\label{eq:Lcoercive}
 c\norm\xi_{H^1}^2\le\mathcal L\le C\norm\xi_{H^1}^2.
\end{equation}
\end{lemma}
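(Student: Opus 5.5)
We expand $\mathcal L$ to second order in $\xi/m$, using the same Taylor expansion as in Lemma~\ref{lem:expansions}, and read off coercivity from the spectral gap \eqref{eq:gap}. Since $\partial_x u=e^{i\gamma}\partial_x\xi$, the kinetic part is exactly $\frac12\norm{\partial_x\xi}_2^2$. For the potential part, apply \eqref{eq:moment} with $k=p$:
\[
 \frac{\lambda}{p+2}\Bigl(\norm{m+\xi}_{p+2}^{p+2}-m^{p+2}\Bigr)
 =\frac{\lambda p}{2}m^{p}\norm{\Re\xi}_2^2+O\bigl(|\lambda|m^{p-1}\norm\xi_{H^1}^3\bigr).
\]
The linear term and the $|\xi|^2$ term have already cancelled through the orthogonality relations \eqref{eq:orthogonality}, which is the key structural input. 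This gives
\[
 \mathcal L=\frac12\norm{\partial_x\xi}_2^2+\frac{\lambda p}{2}m^{p}\norm{\Re\xi}_2^2+R,\qquad |R|\le C|\lambda| m^{p-1}\norm\xi_{H^1}^3\le C\rho\, m(0)^p\norm\xi_{H^1}^2,
\]
using $\norm\xi_{H^1}\le\rho m$ and $m\le m(0)$.

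\textbf{Upper bound.} This is immediate: $\mathcal L\le \bigl(\frac12+\frac{|\lambda|p}{2}m(0)^p+C\rho\bigr)\norm\xi_{H^1}^2$.

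\textbf{Lower bound, case $\lambda\ge0$.} The middle term is nonnegative. Discard it, so that $\mathcal L\ge\frac12\norm{\partial_x\xi}_2^2-C\rho\norm\xi_{H^1}^2$. Then conclude with \eqref{eq:poincare} and small $\rho$.

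\textbf{Lower bound, case $\lambda<0$.} This is the main step, and it uses \eqref{eq:gap}. Write $\Re\xi=a+w$, where $a=\int_0^1\Re\xi\,dx$ and $w$ has zero mean. By \eqref{eq:orthogonality}, $|a|=\norm\xi_2^2/(2m)\le\frac{\rho}{2}\norm\xi_2$, so $\norm{\Re\xi}_2^2\le\norm w_2^2+C\rho^2\norm\xi_2^2$. By Poincar\'e on the mean-zero function $w$,
\[
 \norm{\Re\xi}_2^2\le\frac{1}{4\pi^2}\norm{\partial_x\Re\xi}_2^2+C\rho^2\norm\xi_{H^1}^2 .
\]
Since $m\le m(0)$ and $\lambda<0$, we have $\lambda p m^p\ge\lambda p m(0)^p$. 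Hence
\[
 \frac12\norm{\partial_x\Re\xi}_2^2+\frac{\lambda p}{2}m^p\norm{\Re\xi}_2^2\ge\frac{4\pi^2+\lambda p m(0)^p}{8\pi^2}\norm{\partial_x\Re\xi}_2^2-C\rho^2\norm\xi_{H^1}^2.
\]
The factor in front of $\norm{\partial_x\Re\xi}_2^2$ is positive exactly by \eqref{eq:gap}. The imaginary part contributes $\frac12\norm{\partial_x\Im\xi}_2^2$ with no negative counterpart. Combining, $\mathcal L\ge c_0\norm{\partial_x\xi}_2^2-C\rho\norm\xi_{H^1}^2$ with $c_0>0$ depending only on $p,\lambda,m(0)$. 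Then \eqref{eq:poincare} converts $\norm{\partial_x\xi}_2^2$ into $\norm\xi_{H^1}^2$, and choosing $\rho$ small absorbs the error term.

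The only delicate point is this last case. There one must check that the mean of $\Re\xi$ is genuinely quadratically small, via \eqref{eq:orthogonality}, so that Poincar\'e applies with the sharp constant $4\pi^2$ up to an absorbable error. One must also check that the constants stay uniform in $m\in(0,m(0)]$. The bound $m\le m(0)$ makes both the gap term and the cubic remainder uniform.
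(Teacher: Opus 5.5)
Your proof is correct and follows essentially the same route as the paper. You expand $\mathcal L$ via \eqref{eq:moment} with $k=p$ into $\frac12\|\partial_x\xi\|_2^2+\frac{\lambda p}{2}m^p\|\Re\xi\|_2^2$ plus an $O(|\lambda|m^{p-1}\|\xi\|_{H^1}^3)$ remainder, and in the focusing case you apply Poincar\'e to $\Re\xi$, whose mean is quadratically small by \eqref{eq:orthogonality}, together with \eqref{eq:gap} at $m(0)$ and \eqref{eq:poincare}. Your mean/zero-mean splitting of $\Re\xi$ is a spelled-out form of the paper's bound $\|\Re\xi\|_2^2\le\frac{1}{4\pi^2}\|\partial_x\Re\xi\|_2^2+\|\xi\|_2^4/(4m^2)$, and you also write out the upper bound, which the paper leaves implicit.
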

\begin{proof}
Using \eqref{eq:moment} in the definition of $\mathcal L$ gives
\begin{equation}\label{eq:Lquadratic}
 2\mathcal L
 =\norm{\partial_x\xi}_2^2
 +\lambda p m^{p}\norm{\Re\xi}_2^2
 +O\bigl(|\lambda|m^{p-1}\norm\xi_{H^1}^3\bigr).
\end{equation}
If $\lambda\ge0$, the quadratic form controls
$\norm{\partial_x\xi}_2^2$. If $\lambda<0$, by \eqref{eq:orthogonality}, and Poincar\'e inequality we get
\[
 \norm{\Re\xi}_2^2
 \le\frac1{4\pi^2}\norm{\partial_x\Re\xi}_2^2
  +\frac{\norm\xi_2^4}{4m^2}.
\]
Since $m\le m(0)$, \eqref{eq:gap} and \eqref{eq:poincare} imply
\[
\begin{aligned}
\|\partial_x\xi\|_2^2+\lambda p m^p\|\Re\xi\|_2^2
\ge{}&
\|\partial_x\Im\xi\|_2^2
+\left(1+\frac{\lambda p m^p}{4\pi^2}\right)
     \|\partial_x\Re\xi\|_2^2 -\frac{|\lambda|p m^p}{4m^2}\|\xi\|_2^4 \ge c\norm\xi_{H^1}^2
\end{aligned}
\]
for $\rho$ sufficiently small. The remainder in \eqref{eq:Lquadratic} is
bounded by $C\rho\norm\xi_{H^1}^2$ and can also be absorbed.
\end{proof}

\section{Lyapunov functional and relative stability}\label{sec:decay}
Since $m(t)$ decays, relative stability requires control of the quantity
$\norm\xi_{H^1}^2/m^2$. We therefore consider $\mathcal L/m^2$. We define a \emph{corrected normalized energy}
\begin{equation}\label{eq:corrected}
 \mathcal H(t)=\frac{\mathcal L(t)}{m(t)^2}
 +\frac{q}{2}\varepsilon m(t)^{q}
  \int_0^1\frac{\Re\xi(t,x)}{m(t)}
    \frac{\Im\xi(t,x)}{m(t)}\,dx.
\end{equation}

\begin{lemma}\label{lem:corrected}
Assume \eqref{eq:gap}. There exist $\rho>0$ and $\varepsilon_*>0$
such that, whenever $0<m\le m(0)$,
$\norm\xi_{H^1}\le\rho m$ and $0<\varepsilon\le\varepsilon_*$,
\begin{equation}\label{eq:Hcoercive}
 c\frac{\norm\xi_{H^1}^2}{m^2}
 \le\mathcal H\le C\frac{\norm\xi_{H^1}^2}{m^2}.
\end{equation}
Moreover, the following estimates hold: 
\begin{equation}\label{eq:mainerror}
 \left|\dot{\mathcal H}+q\varepsilon m^{q}\mathcal H\right| \le C\varepsilon m^{q}
 \left(|\lambda|m^{p}+\varepsilon m^{q}
    +\frac{\norm\xi_{H^1}}m\right)\mathcal H,
\end{equation}
\begin{equation}\label{eq:Hgradient}
 \left|\mathcal H-\frac{\norm{\partial_x\xi}_2^2}{2m^2}\right|
 \le C\bigl(|\lambda|m^{p}+\varepsilon m^{q}\bigr)
     \frac{\norm{\partial_x\xi}_2^2}{m^2}.
\end{equation}
\end{lemma}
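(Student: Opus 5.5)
The plan is to work in the normalized variable $w=\xi/m$ and differentiate the two pieces of $\mathcal H$ separately. Coercivity \eqref{eq:Hcoercive} is immediate: by Lemma~\ref{lem:coercivity}, $\mathcal L/m^2$ is comparable to $\norm\xi_{H^1}^2/m^2$. The correction term is bounded by $\frac q2\varepsilon m^q\norm\xi_2^2/m^2\le C\varepsilon_*\norm\xi_{H^1}^2/m^2$, since $m\le m(0)$, so it is absorbed for $\varepsilon_*$ small.

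Similarly, \eqref{eq:Hgradient} follows from \eqref{eq:Lquadratic}. Indeed $2\mathcal L/m^2-\norm{\partial_x\xi}_2^2/m^2=\lambda p m^{p}\norm{\Re\xi}_2^2/m^2+O(|\lambda| m^{p}\norm\xi_{H^1}^3/m^3)$. Using \eqref{eq:poincare} to replace $\norm\xi_{H^1}$ by $\norm{\partial_x\xi}_2$, the cubic term is bounded by $\rho$ times the quadratic one. Together with the correction term, this gives the stated bound.

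For \eqref{eq:mainerror} I first compute $\dot{\mathcal L}$ from \eqref{eq:nls}. The Hamiltonian part conserves $E$, and the damping contributes
\[
\frac{d}{dt}E(u)=-\varepsilon\Re\int_0^1 \overline{\partial_t u}\,\bigl(\dots\bigr)\,dx ,
\]
which more concretely equals $-\varepsilon\Re\int \partial_x\bar u\,\partial_x(|u|^q u)\,dx-\varepsilon\lambda\int|u|^{p+q+2}\,dx$ up to sign conventions. The second piece of $\mathcal L$ has derivative $-\lambda m^{p+1}\dot m$, and $\dot m$ is given by \eqref{eq:massderivative}. The $\lambda$-terms combine into $-\varepsilon\lambda(\norm u_{p+q+2}^{p+q+2}-m^{p}\norm u_{q+2}^{q+2})$. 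By \eqref{eq:moment} this is $O(\varepsilon|\lambda| m^{p+q}\norm\xi_{H^1}^2)$, which is an admissible error once divided by $m^2$.

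For the gradient piece I expand $\partial_x(|u|^qu)$ with $u=e^{i\gamma}(m+\xi)$. This gives $m^q\bigl[(1+\tfrac q2\cdot 2)\partial_x\Re\xi+i\partial_x\Im\xi\bigr]$ plus cubic terms. The exact coefficient is what should be tracked: $\partial_x(|u|^qu)\approx m^q[(q+1)\partial_x\Re\xi+i\partial_x\Im\xi]$. Hence
\[
\dot{\mathcal L}\approx-\varepsilon m^q\bigl[(q+1)\norm{\partial_x\Re\xi}_2^2+\norm{\partial_x\Im\xi}_2^2\bigr].
\]
The factor $m^{-2}$ contributes $-2\dot m/m\cdot\mathcal L/m^2\approx 2\varepsilon m^q\mathcal L/m^2\approx\varepsilon m^q\norm{\partial_x\xi}_2^2/m^2$ by \eqref{eq:momentbound}. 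Putting these together,
\[
\frac{d}{dt}\frac{\mathcal L}{m^2}\approx-\varepsilon m^q\frac{q\norm{\partial_x\Re\xi}_2^2}{m^2}.
\]
This is the asymmetric dissipation that the correction term must fix.

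The main step is differentiating $\frac q2\varepsilon m^q\int\Re w\,\Im w$. The dominant contribution comes from the dispersive part of \eqref{eq:xiequation}: $\partial_t\Re\xi\ni\partial_x^2\Im\xi$ and $\partial_t\Im\xi\ni-\partial_x^2\Re\xi$. After integrating by parts, these give
\[
\frac q2\varepsilon m^{q}\,\frac{\norm{\partial_x\Re\xi}_2^2-\norm{\partial_x\Im\xi}_2^2}{m^2}.
\]
Adding this to the previous line yields $-\frac q2\varepsilon m^q\norm{\partial_x\xi}_2^2/m^2$, which by \eqref{eq:Hgradient} equals $-q\varepsilon m^q\mathcal H$ up to errors of size $\varepsilon m^q(|\lambda|m^p+\varepsilon m^q)\mathcal H$.

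The remaining contributions to the derivative of the correction are all lower order:
\begin{itemize}
\item the $\dot\gamma$ terms, where $\dot\gamma\approx\lambda m^p$ by \eqref{eq:phaseerror}, and its leading part $\dot\gamma\,\Re\xi$ in $\partial_t\Im\xi$ combines with the linearized $\lambda$ term $\lambda p m^{p}\Re\xi$ to give $O(|\lambda| m^p\norm\xi_2^2)$;
\item the $\dot m$ terms from differentiating $m^{q-2}$ and from $-\dot m$ in $\partial_t\xi$, which is orthogonal to $\Im\xi$ by \eqref{eq:orthogonality};
\item the damping terms.
\end{itemize}
All of these are bounded by $C\varepsilon m^q(|\lambda|m^p+\varepsilon m^q)\norm\xi_{H^1}^2/m^2$. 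Cubic remainders are controlled via $\norm\xi_\infty\lesssim\norm\xi_{H^1}$ and \eqref{eq:poincare}, producing the $\norm\xi_{H^1}/m$ factor.

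The main obstacle I expect is justifying the time derivative for $H^1$ solutions, where $\partial_x^2\xi$ appears. I would first carry out the computation for $H^2$ data, noting that the final expression involves only $H^1$ quantities, and then pass to the limit by continuous dependence. The second difficulty is keeping exact track of the coefficient $q+1$ so that the $\norm{\partial_x\Re\xi}_2^2$ and $\norm{\partial_x\Im\xi}_2^2$ terms balance to exactly $-q\varepsilon m^q\mathcal H$.
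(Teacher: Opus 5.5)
Your proposal is correct and follows essentially the paper's argument. Coercivity and \eqref{eq:Hgradient} come from Lemma~\ref{lem:coercivity} and \eqref{eq:Lquadratic}, and \eqref{eq:mainerror} comes from adding the leading dissipation $-q\varepsilon m^q\|\partial_x\Re\xi\|_2^2/m^2$ in $\frac{d}{dt}(\mathcal L/m^2)$ to the dispersive contribution $\frac q2\varepsilon m^q(\|\partial_x\Re\xi\|_2^2-\|\partial_x\Im\xi\|_2^2)/m^2$ of the mixed term. The only difference is that you absorb all $\varepsilon\lambda m^{p+q}$ terms as admissible errors at once and pass from $-\frac q2\varepsilon m^q\|\partial_x\xi\|_2^2/m^2$ to $-q\varepsilon m^q\mathcal H$ via \eqref{eq:Hgradient}, rather than tracking the exact coefficients; this is harmless, since the paper also ends up bounding those terms as errors.
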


\begin{proof}
The bound
\[
\left|\mathcal H-\frac{\mathcal L}{m^2}\right|
\le\frac q4\varepsilon m^q\frac{\|\xi\|_2^2}{m^2}.
\]
together with \eqref{eq:Lcoercive} proves \eqref{eq:Hcoercive} once $\varepsilon_*m(0)^{q}$ is sufficiently
small. The expansion \eqref{eq:Lquadratic}, together with
\eqref{eq:poincare}, proves \eqref{eq:Hgradient}. It remains to prove \eqref{eq:mainerror}. We differentiate
$\mathcal H$ in time. We have
\begin{equation}\label{eq:normalizedLexact}
\begin{split}
 \frac d{dt}\left(\frac{\mathcal L}{m^2}\right) &=-\varepsilon m^{q}\int_0^1
 \left|1+\frac\xi m\right|^{q}
 \left(\left|\frac{\partial_x\xi}{m}\right|^2
  +q\left|\partial_x\left|1+\frac\xi m\right|\right|^2
 \right)\,dx\\
 &\quad-\varepsilon\lambda m^{p+q}
 \left(\frac{\norm{m+\xi}_{p+q+2}^{p+q+2}}{m^{p+q+2}}
  -\frac{\norm{m+\xi}_{q+2}^{q+2}}{m^{q+2}}\right)\\
 &\quad+2\varepsilon m^{q}
  \frac{\norm{m+\xi}_{q+2}^{q+2}}{m^{q+2}}\frac{\mathcal L}{m^2}.
\end{split}
\end{equation}
We now expand the three terms on the right-hand side of \eqref{eq:normalizedLexact}.
Since $|\xi|\le m/2$,
\[
 \left|\left|1+\frac\xi m\right|^{q}-1\right|
  \le C\frac{|\xi|}{m},\qquad
 \left|\partial_x\left|1+\frac\xi m\right|
    -\frac{\partial_x\Re\xi}{m}\right|
  \le C\frac{|\xi|}{m}\frac{|\partial_x\xi|}{m}.
\]
Thus the first integral differs from
\[
 - \varepsilon m^q \left( \frac{\norm{\partial_x\xi}_2^2
    +q\norm{\partial_x\Re\xi}_2^2}{m^2} \right)
\]
by a rest controlled by $C \varepsilon m^q \norm\xi_{H^1}^3/m^3$.
For the second term we use \eqref{eq:moment} with
$k=p+q$ and $k=q$. For the last term we use
\eqref{eq:momentbound} and \eqref{eq:Lquadratic} to get 
\[
\Bigg|
2\varepsilon m^q
 \frac{\|m+\xi\|_{q+2}^{q+2}}{m^{q+2}}
 \frac{\mathcal L}{m^2}
-\varepsilon m^q\frac{\|\partial_x\xi\|_2^2}{m^2}
-\varepsilon\lambda p m^{p+q}
    \frac{\|\Re\xi\|_2^2}{m^2}
\Bigg|
\le C\varepsilon m^q(1+|\lambda|m^p)
    \frac{\|\xi\|_{H^1}^3}{m^3}.
\]
The contributions involving $\norm{\partial_x\xi}_2^2/m^2$ cancel.
The coefficient of
$\varepsilon\lambda m^{p+q}\norm{\Re\xi}_2^2/m^2$ is 
\[
 -\frac{(p+q+2)(p+q)-q(q+2)}2+p
  =-\frac{p(p+2q)}2.
\]
Consequently,
\begin{equation}\label{eq:normalizedLderivative}
\left|
 \frac d{dt}\left(\frac{\mathcal L}{m^2}\right) +q\varepsilon m^{q}
     \frac{\norm{\partial_x\Re\xi}_2^2}{m^2} +\frac{p(p+2q)}2\varepsilon\lambda m^{p+q}
     \frac{\norm{\Re\xi}_2^2}{m^2}
\right| \le C\varepsilon m^{q}
  (1+|\lambda|m^{p})\frac{\norm\xi_{H^1}^3}{m^3}.
\end{equation}
To compute the time derivative of the second term on the right-hand side of \eqref{eq:corrected}, we exploit \eqref{eq:xiequation} to get
\begin{equation}\label{eq:relativeequation1}
 \begin{split}
 \partial_t\left(\frac\xi m\right)
 ={}&-i\partial_x^2\left(\frac\xi m\right)
  -i\dot\gamma\left(1+\frac\xi m\right)+i\lambda m^{p}
  \left|1+\frac\xi m\right|^{p}\left(1+\frac\xi m\right)\\
 &-\varepsilon m^{q}
  \left|1+\frac\xi m\right|^{q}\left(1+\frac\xi m\right)
  +\varepsilon m^{q}\frac{\norm{m+\xi}_{q+2}^{q+2}}{m^{q+2}}
     \left(1+\frac\xi m\right).
\end{split}
\end{equation}
Decomposing with Taylor expansion the two nonlinearities, we write
\begin{equation}\label{eq:relativeequation}
\begin{split}
 \partial_t\left(\frac\xi m\right)
 ={}&-i\partial_x^2\left(\frac\xi m\right)
  +i\lambda p m^{p}\frac{\Re\xi}{m} -q\varepsilon m^{q}\frac{\Re\xi}{m}
  +\mathcal R.
\end{split}
\end{equation}
where
\begin{equation}
 \label{eq:rest1}
\begin{aligned}
\mathcal R= &
-i(\dot\gamma-\lambda m^p)
  \left(1+\frac{\xi}{m}\right) +\varepsilon m^q
 \left(\frac{\|m+\xi\|_{q+2}^{q+2}}{m^{q+2}}-1\right)
  \left(1+\frac{\xi}{m}\right)\\
&+i\lambda m^p
 \left[
 \left|1+\frac{\xi}{m}\right|^p
  \left(1+\frac{\xi}{m}\right)
 -1-\frac{\xi}{m}-p\frac{\Re\xi}{m}
 \right]\\
&-\varepsilon m^q
 \left[
 \left|1+\frac{\xi}{m}\right|^q
  \left(1+\frac{\xi}{m}\right)
 -1-\frac{\xi}{m}-q\frac{\Re\xi}{m}
 \right].
\end{aligned}
\end{equation}
By \eqref{eq:moment} and \eqref{eq:phaseerror} we conclude that
\begin{equation}\label{eq:relativeerror}
 \norm{\mathcal R}_2
 \le C\bigl(|\lambda|m^{p}+\varepsilon m^{q}\bigr)
      \frac{\norm\xi_{H^1}^2}{m^2}.
\end{equation}
Now by taking the real and imaginary parts of \eqref{eq:relativeequation}
we have 
\[
\begin{aligned}
\frac{d}{dt}\int_0^1
 \frac{\Re\xi}{m}\frac{\Im\xi}{m}\,dx
=& -\frac{\|\partial_x\Im\xi\|_2^2}{m^2}
+\frac{\|\partial_x\Re\xi\|_2^2}{m^2} \\
 & +\lambda p m^p\frac{\|\Re\xi\|_2^2}{m^2} -q\varepsilon m^q
 \int_0^1\frac{\Re\xi}{m}\frac{\Im\xi}{m}\,dx
+\int_0^1\left(
 \Re\mathcal R\,\frac{\Im\xi}{m}
+\Im\mathcal R\,\frac{\Re\xi}{m}
\right)\,dx,
\end{aligned}
\]
which implies from \eqref{eq:massderivative} that
\begin{equation}\label{eq:mixedderivative}
\begin{split}
 \frac d{dt}\left[
  \frac{q}{2}\varepsilon m^{q}
  \int_0^1\frac{\Re\xi}{m}\frac{\Im\xi}{m}\,dx\right] & =\frac{q}{2}\varepsilon m^{q}
  \frac{\norm{\partial_x\Re\xi}_2^2
     -\norm{\partial_x\Im\xi}_2^2}{m^2}+\frac{qp}2\varepsilon\lambda m^{p+q}
     \frac{\norm{\Re\xi}_2^2}{m^2}\\
 &\qquad-\frac{q^2}{2}\varepsilon^2m^{2q}
  \left(1+\frac{\norm{m+\xi}_{q+2}^{q+2}}{m^{q+2}}\right)
  \int_0^1\frac{\Re\xi}{m}\frac{\Im\xi}{m}\,dx\\
 &\qquad+\frac{q}{2}\varepsilon m^{q}
  \int_0^1\left(\Re\mathcal R\,\frac{\Im\xi}{m}
      +\Im\mathcal R\,\frac{\Re\xi}{m}\right)\,dx.
\end{split}
\end{equation}
Combine \eqref{eq:normalizedLderivative} with
\eqref{eq:mixedderivative}. The second term on the left-hand side of \eqref{eq:normalizedLderivative} and the first on the right-hand side of \eqref{eq:mixedderivative} give 
\begin{equation}
 \label{eq:cancelation2}
 -q\varepsilon m^{q}
  \frac{\norm{\partial_x\Re\xi}_2^2}{m^2}+\frac{q}{2}\varepsilon m^{q} \frac{\norm{\partial_x\Re\xi}_2^2 -\norm{\partial_x\Im\xi}_2^2}{m^2} =-\frac{q}{2}\varepsilon m^{q} \frac{\norm{\partial_x\xi}_2^2}{m^2}.
\end{equation}
Now observe that 
\begin{multline}\label{eq:addedterm}
 q\varepsilon m^q\mathcal H
= \frac q2\varepsilon m^q
     \frac{\|\partial_x\xi\|_2^2}{m^2}
+\frac{qp}{2}\varepsilon\lambda m^{p+q}
     \frac{\|\Re\xi\|_2^2}{m^2} +\frac{q^2}{2}\varepsilon^2m^{2q}
  \int_0^1\frac{\Re\xi}{m}\frac{\Im\xi}{m}\,dx\\
+q\varepsilon m^q
 \left[
 \frac{\mathcal L}{m^2}
 -\frac{\|\partial_x\xi\|_2^2}{2m^2}
 -\frac{\lambda p}{2}m^p\frac{\|\Re\xi\|_2^2}{m^2}
 \right].
\end{multline}
After adding $q\varepsilon m^{q}\mathcal H$ to
$\dot{\mathcal H}$, by \eqref{eq:cancelation2}, the first term on the right-hand side of \eqref{eq:addedterm} cancels. For the second term on the right-hand side of \eqref{eq:addedterm}, we get by \eqref{eq:mixedderivative} and \eqref{eq:normalizedLderivative}
\[
\left(-\frac{p(p+2q)}2
 +\frac{qp}2+\frac{qp}2\right)\varepsilon\lambda m^{p+q}
       \frac{\norm{\Re\xi}_2^2}{m^2} = -\frac{p^2}{2}\varepsilon\lambda m^{p+q}
       \frac{\norm{\Re\xi}_2^2}{m^2}.
\]
Thus, it follows that 
\[
\begin{aligned}
\dot{\mathcal H}+q\varepsilon m^q\mathcal H
=&-\frac{p^2}{2}\varepsilon\lambda m^{p+q}
     \frac{\|\Re\xi\|_2^2}{m^2} -\frac{q^2}{2}\varepsilon^2m^{2q}
 \frac{\|m+\xi\|_{q+2}^{q+2}}{m^{q+2}}
 \int_0^1\frac{\Re\xi}{m}\frac{\Im\xi}{m}\,dx\\
&+\frac q2\varepsilon m^q
 \int_0^1\left(
 \Re\mathcal R\,\frac{\Im\xi}{m}
 +\Im\mathcal R\,\frac{\Re\xi}{m}
 \right)\,dx \\
 &+\left[
 \frac d{dt}\left(\frac{\mathcal L}{m^2}\right)
 +q\varepsilon m^q\frac{\|\partial_x\Re\xi\|_2^2}{m^2}
 +\frac{p(p+2q)}2\varepsilon\lambda m^{p+q}
     \frac{\|\Re\xi\|_2^2}{m^2}
 \right]\\
&+q\varepsilon m^q
 \left[
 \frac{\mathcal L}{m^2}
 -\frac{\|\partial_x\xi\|_2^2}{2m^2}
 -\frac{\lambda p}{2}m^p\frac{\|\Re\xi\|_2^2}{m^2}
 \right].
\end{aligned}
\]
Thus by \eqref{eq:momentbound}, \eqref{eq:relativeerror}, \eqref{eq:normalizedLderivative} and \eqref{eq:Lquadratic} (divided by $2m^2$), we conclude 
\[
 \begin{split}
 &\left|\dot{\mathcal H}+q\varepsilon m^{q}\mathcal H\right| \le C\varepsilon m^{q}
 \left(|\lambda|m^{p}+\varepsilon m^{q}
      +\frac{\norm\xi_{H^1}}m\right)
      \frac{\norm\xi_{H^1}^2}{m^2}.
 \end{split}
\]
Finally apply \eqref{eq:Hcoercive}. This proves \eqref{eq:mainerror}.
\end{proof}

\subsection{Keeping the relative perturbation small}
We now use \eqref{eq:mainerror} to prove global relative stability.
For every $u_0\in H^1(\T)$ and $p,q>0$, the standard local
well-posedness theory gives a unique solution
$u\in C([0,T];H^1(\T))$ for some $T>0$; see
\cite{ACS} for instance. Since the initial relative perturbation
is sufficiently small, the spatial mean is initially nonzero as highlighted in Remark \ref{rem:mean}.
By continuity, the minimizing phase and the perturbation are
defined on a sufficiently short interval, on which
$\|\xi(t)\|_{H^1}<\rho m(t)$. On this time interval, the solution stays separated from zero. Smooth approximation justifies the energy identities in integrated form.
In particular, $\mathcal H$ is locally absolutely continuous,
and \eqref{eq:mainerror} holds almost everywhere.

Moreover, the mass law implies
\[
 -\dot m\ge\varepsilon m^{q+1},\qquad
 -\frac d{dt}m^{p}\ge p\varepsilon m^{p+q},\qquad
 -\frac d{dt}(\varepsilon m^{q})
  \geq q \varepsilon^2m^{2q}.
\]
Integration gives two bounds we will use repeatedly:
\begin{equation}\label{eq:integrablecoeff}
 \varepsilon\int_0^t m(s)^{p+q}\,ds
  \le\frac{m(0)^{p}}{p},\qquad
 \varepsilon^2\int_0^t m(s)^{2q}\,ds
  \le\frac{\varepsilon m(0)^{q}}{q}.
\end{equation}
In particular, the conservative term on the right of
\eqref{eq:mainerror} has a finite time integral.

Now consider the maximal interval on which the solution
exists, its spatial mean is nonzero and $\norm\xi_{H^1}/m<\rho$.

Choose $\rho$ and $\varepsilon_*$ small enough that
$C(\rho+\varepsilon_*m(0)^{q})\le q/2$ in
\eqref{eq:mainerror}. That estimate then gives
\[
 \dot{\mathcal H}
 \le\left(-\frac{q}{2}\varepsilon m^{q}
     +C|\lambda|\varepsilon m^{p+q}\right)\mathcal H.
\]
Multiply by
\[
 \exp\left(\frac{q}{2}\varepsilon\int_0^t m(s)^{q}\,ds
  -C|\lambda|\varepsilon\int_0^t m(s)^{p+q}\,ds\right)
\]
and integrate. The first bound in \eqref{eq:integrablecoeff} shows that
\begin{equation}\label{eq:coarseH}
 \mathcal H(t)\le C\mathcal H(0)
  \exp\left(-\frac{q}{2}\varepsilon
      \int_0^t m(s)^{q}\,ds\right).
\end{equation}
By \eqref{eq:Hcoercive},
\begin{equation}\label{eq:coarsexi}
 \frac{\norm{\xi(t)}_{H^1}}{m(t)}
 \le C_0\frac{\norm{\xi(0)}_{H^1}}{m(0)}
  \exp\left(-\frac{q}{4}\varepsilon
      \int_0^t m(s)^{q}\,ds\right).
\end{equation}
Choose $\delta_*$ in \eqref{eq:paramcond} such that $C_0\delta_*<\rho$.
By \eqref{eq:coarsexi}, throughout this time interval of existence,
\[
 \frac{\norm{\xi(t)}_{H^1}}{m(t)}\le C_0\delta_*<\rho.
\]
We also need to exclude convergence to zero in finite time.
By \eqref{eq:momentbound}, we can choose $\rho$ so that
$1\le\norm{m+\xi}_{q+2}^{q+2}/m^{q+2}\le2$.
Integrating \eqref{eq:massderivative} yields
\begin{equation}\label{eq:masssandwich}
 \frac{m(0)}
 {\bigl(1+2q\varepsilon m(0)^{q}t\bigr)^{1/q}}
 \le m(t)\le
 \frac{m(0)}
 {\bigl(1+q\varepsilon m(0)^{q}t\bigr)^{1/q}}.
\end{equation}
Thus, on every finite interval, $m$ has a positive lower bound,
$|u|=|m+\xi|\ge m/2$ and $\norm u_{H^1}\le C m(0)$.
Moreover, \eqref{eq:orthogonality} gives
\[
 e^{-i\gamma}\int_0^1u\,dx
  =m-\frac{\norm\xi_2^2}{2m}\ge\frac m2>0.
\]
Thus the mean stays bounded away from zero on every finite interval,
and the minimizing phase in \eqref{eq:phasechoice} admits a $C^1$
real-valued representative that can be continued.
The $H^1$ bound and the positive lower bound on $|u|$ allow the local
solution to be continued past any finite endpoint. Together with
$\norm\xi_{H^1}/m\le C_0\delta_*<\rho$ and the nonvanishing mean,
this proves global existence and relative smallness for all $t\ge0$.

The lower bound in \eqref{eq:masssandwich} implies
\[
 \varepsilon\int_0^\infty m(t)^{q}\,dt=\infty,
\]
and \eqref{eq:coarsexi} consequently gives
$\norm\xi_{H^1}/m\to0$.

\subsection{Obtaining the exact decay rate}
The bound \eqref{eq:coarsexi} suffices to keep the solution close to
the constant orbit. To obtain the exponent $1/2$ in
\eqref{eq:relativebound}, we now integrate the full two-sided estimate
\eqref{eq:mainerror}.

We first record why its perturbation term is integrable. Differentiating
the exponential in \eqref{eq:coarsexi} shows directly that
\begin{equation}\label{eq:integrablexi}
 \varepsilon\int_0^\infty
  m(t)^{q}\frac{\norm{\xi(t)}_{H^1}}{m(t)}\,dt
 \le\frac{4C_0}{q}\frac{\norm{\xi(0)}_{H^1}}{m(0)}.
\end{equation}

Assume first that $u_0$ is nonconstant. Then by \eqref{eq:Hcoercive}
$\mathcal H(0)>0$, and the lower differential bound in
\eqref{eq:mainerror} implies $\mathcal H(t)>0$ at each finite time.
The main term in \eqref{eq:mainerror} suggests comparison with
$m^{q}$, because the mass law gives
\[
 \frac d{dt}\log m^{q}
 =-q\varepsilon m^{q}\frac{\norm{m+\xi}_{q+2}^{q+2}}{m^{q+2}}.
\]
We therefore divide $\mathcal H$ by $\varepsilon m^{q}$.
The constant factor $\varepsilon$ is included to match the
kinetic-energy limit in \eqref{eq:kineticasymptotic}.
Then 
\[
 \frac d{dt}\log\frac{\mathcal H}{\varepsilon m^{q}}
 =\frac{\dot{\mathcal H}
    +q\varepsilon m^{q}\mathcal H}{\mathcal H}+q\varepsilon m^{q}
   \left(\frac{\norm{m+\xi}_{q+2}^{q+2}}{m^{q+2}}-1\right).
\]
Use \eqref{eq:mainerror} for the first term and
\eqref{eq:momentbound} for the second to get 
\begin{equation}\label{eq:logratio}
 \left|\frac d{dt}\log\frac{\mathcal H}{\varepsilon m^{q}}\right|
 \le C\varepsilon m^{q}
  \left(|\lambda|m^{p}+\varepsilon m^{q}
       +\frac{\norm\xi_{H^1}}m\right).
\end{equation}
Its right-hand side has a finite integral by
\eqref{eq:integrablecoeff} and \eqref{eq:integrablexi}, with a bound
uniform for $\varepsilon\le\varepsilon_*$ and $\delta\le\delta_*$.
Hence the logarithm has a finite real limit, and
\begin{equation}\label{eq:Hlimit}
 \frac{\mathcal H(t)}{\varepsilon m(t)^{q}}
  \longrightarrow\ell\in(0,\infty).
\end{equation}
Integrating \eqref{eq:logratio} also gives, for all $t\ge0$,
\begin{equation}\label{eq:Hcomparison}
 C^{-1}\frac{\mathcal H(0)}{\varepsilon m(0)^{q}}
 \le\frac{\mathcal H(t)}{\varepsilon m(t)^{q}}
 \le C\frac{\mathcal H(0)}{\varepsilon m(0)^{q}}.
\end{equation}
Combining the upper bound with \eqref{eq:Hcoercive} and
\eqref{eq:upper}, we find
\[
\begin{split}
 \frac{\norm{\xi(t)}_{H^1}^2}{m(t)^2}
 &\le C\frac{\norm{\xi(0)}_{H^1}^2}{m(0)^2}
       \left(\frac{m(t)}{m(0)}\right)^{q} \le\frac{C\delta^2}{1+q\varepsilon m(0)^{q}t}.
\end{split}
\]
Taking square roots proves \eqref{eq:relativebound}. Finally, observe that for constant initial data, $\xi$ vanishes identically and the solution
is given by \eqref{eq:homogeneous}, so the same bound holds.

\section{Rest of the proofs of Theorem \ref{thm:main} and Corollary \ref{cor:phase}} \label{sec:proof}
We deduce the remaining parts of Theorem \ref{thm:main} from \eqref{eq:relativebound}, the mass law, and the limit \eqref{eq:Hlimit}.

\subsection{The mass and the $H^1$ norm}
Equations \eqref{eq:momentbound} and \eqref{eq:relativebound} give
\[
 0\le\frac{\norm{m+\xi}_{q+2}^{q+2}}{m^{q+2}}-1
 \le\frac{C\delta^2}{1+q\varepsilon m(0)^{q}t}.
\]
We substitute this into the second identity of
\eqref{eq:massderivative} and we integrate to get
\[
\begin{split}
 &0\le m(t)^{-q}-m(0)^{-q}-q\varepsilon t \le C\delta^2q\varepsilon\int_0^t
  \frac{ds}{1+q\varepsilon m(0)^{q}s} =C\delta^2m(0)^{-q}
  \log\bigl(1+q\varepsilon m(0)^{q}t\bigr).
\end{split}
\]
This is \eqref{eq:masscorrection}. Dividing \eqref{eq:masscorrection} by the second equation in \eqref{eq:upper} and using
$0\le1-(1+a)^{-1/q}\le a/q$ for $a\ge0$, we obtain
\begin{equation}\label{eq:amplituderatio}
 0\le1-\frac{m(t)}{m_{\mathrm{ad}}(t)}
 \le C\delta^2
 \frac{\log\bigl(1+q\varepsilon m(0)^{q}t\bigr)}
  {1+q\varepsilon m(0)^{q}t}.
\end{equation}
Thus $m/m_{\mathrm{ad}}\to1$, which proves the $L^2$ limit in
\eqref{eq:massasymptotic}. Since
\[
 \frac{\norm u_{H^1}^2}{m^2}
  =1+\frac{\norm{\partial_x\xi}_2^2}{m^2}\longrightarrow1
\]
by \eqref{eq:relativebound}, the $H^1$ limit follows as well.

\subsection{The positive kinetic-energy coefficient}
Suppose $u_0$ is nonconstant. By \eqref{eq:Hcoercive},
\eqref{eq:poincare} and the positivity of $\mathcal H(t)$,
$\norm{\partial_x\xi(t)}_2$ is nonzero at every finite time.
Equation \eqref{eq:Hgradient} gives
\[
 \left|\frac{m(t)^2\mathcal H(t)}
     {\norm{\partial_x\xi(t)}_2^2}-\frac12\right|
 \le C\bigl(|\lambda|m(t)^{p}+\varepsilon m(t)^{q}\bigr)
  \longrightarrow0.
\]
Here both terms tend to zero for every $p,q>0$.
Using \eqref{eq:Hlimit} and
$\norm{\partial_xu}_2=\norm{\partial_x\xi}_2$, we conclude that
\[
 \frac{\norm{\partial_xu(t)}_2^2}{\varepsilon m(t)^{q+2}}
 =\frac{\mathcal H(t)}{\varepsilon m(t)^{q}}
  \frac{\norm{\partial_x\xi(t)}_2^2}{m(t)^2\mathcal H(t)}
  \longrightarrow2\ell>0.
\]
This proves \eqref{eq:kineticasymptotic}. Combining it with the
mass asymptotic gives
\[
 [q\varepsilon t]^{(q+2)/q}
  \frac12\norm{\partial_xu(t)}_2^2\longrightarrow\varepsilon\ell.
\]
The kinetic energy is identically zero for constant data.
This completes the proof of Theorem~\ref{thm:main}.

\subsection{Convergence to the homogeneous solution}
We now prove Corollary~\ref{cor:phase}, using the reference phase
$\Phi$ already defined in \eqref{eq:homogeneous}.
We write
\[
 \dot\gamma-\dot\Phi
 =\bigl(\dot\gamma-\lambda m^{p}\bigr)
  +\lambda\bigl(m^{p}-m_{\mathrm{ad}}^{p}\bigr).
\]
We will show that both terms on the right have finite absolute
integrals. For the first term, observe that
\eqref{eq:phaseerror}, \eqref{eq:relativebound} and \eqref{eq:upper} imply
\[
 |\dot\gamma-\lambda m^{p}|
 \le C(\varepsilon) \delta^2 \left((1+t)^{-1-p/q}+(1+t)^{-2}\right).
\]
For the second term, \eqref{eq:masssandwich} ensures
$2^{-1/q}\le m/m_{\mathrm{ad}}\le1$.
Thus \eqref{eq:amplituderatio} gives
\begin{equation}
 \label{eq:phasediff}
 |m^p-m_{\mathrm{ad}}^p| \le C m_{\mathrm{ad}}^p
     \left|\frac m{m_{\mathrm{ad}}}-1\right| \le C(\varepsilon)\delta^2
     (1+t)^{-1-p/q}\log(2+t).
\end{equation}
In particular, both bounds are integrable and 
\begin{equation}\label{eq:diffPhases}
    |\dot\gamma-\dot\Phi|
\le C(\varepsilon)\delta^2
\left[(1+t)^{-1-p/q}\log(2+t)+(1+t)^{-2}\right].
\end{equation}
Thus we get $\gamma(t)-\Phi(t)\to\gamma_\infty$ where 
\begin{equation}
 \label{eq:gammainfty}
 \gamma_\infty :=\gamma(0)
  +\int_0^\infty(\dot\gamma(s)-\dot\Phi(s))\,ds.
\end{equation}
Moreover, integrating \eqref{eq:diffPhases} over $[t, \infty)$ and using \eqref{eq:gammainfty} gives 
\[
|\gamma(t)-\Phi(t)-\gamma_\infty|
\le C(\varepsilon)\delta^2
\left[(1+t)^{-p/q}\log(2+t)+(1+t)^{-1}\right].
\]
Finally, using \eqref{eq:decomposition} and
$m/m_{\mathrm{ad}}\le1$, we have
\begin{equation}\label{eq:weakconvergence}
 \left\|
\frac{u(t)}{m_{\mathrm{ad}}(t)e^{i(\gamma_\infty+\Phi(t))}}-1
\right\|_{H^1}
\le
\frac{\|\xi(t)\|_{H^1}}{m(t)}
+\left|\frac{m(t)}{m_{\mathrm{ad}}(t)}-1\right| +|\gamma(t)-\Phi(t)-\gamma_\infty| \to 0.
\end{equation}
Finally, to get the decay rate in \eqref{eq:orbitlimit}, we exploit \eqref{eq:relativebound}, \eqref{eq:amplituderatio}
and \eqref{eq:phaselimit} to bound the three terms on the right-hand side of \eqref{eq:weakconvergence}. Multiplying by $m_{\mathrm{ad}}(t)\le
C(\varepsilon)(1+t)^{-1/q}$ gives \eqref{eq:orbitlimit}.

For completeness, integration of the formula for $\Phi$ gives
\begin{equation}\label{eq:explicitphase}
 \Phi(t)=
 \begin{cases}
 \displaystyle
 \frac{\lambda m(0)^{p-q}}{\varepsilon(q-p)}
 \left[
 \bigl(1+q\varepsilon m(0)^{q}t\bigr)^{(q-p)/q}-1
 \right],&p\ne q,\\[6pt]
 \displaystyle
 \frac{\lambda}{q\varepsilon}
  \log\bigl(1+q\varepsilon m(0)^{q}t\bigr),&p=q.
 \end{cases}
\end{equation}
For $\lambda\ne0$, $\Phi$ has a finite limit if $p>q$,
a logarithmic leading term if $p=q$, and a power-law leading term
if $p<q$. In every case $\gamma-\Phi$ converges. If $\lambda=0$,
then $\Phi=0$ and $\gamma$ itself converges.

\bibliographystyle{abbrv}
\bibliography{DNLS_references}

\end{document}